\newtheorem{teo}{Theorem}
\newtheorem*{teob}{Theorem}
\newtheorem{lema}[teo]{Lemma}
\newcommand{\A}{{\mathcal A}}
\newcommand{\R}{\mathbb R}
\newcommand{\rr}{\overline{\R}^+}
\newcommand{\N}{\mathbb N}
\def\keywords{\vspace{.5em}
{\textit{Keywords}:\,\relax%
}}
\title{Study of the transcendence of a family of generalized 
continued fractions}
\author{Túlio O. Carvalho} 
\date{April 14, 2020}
\begin{document}

\maketitle

\begin{abstract}
We study a family of generalized continued fractions, which are 
defined by a pair of substitution sequences in a finite 
alphabet. We prove that they are {\em stammering} sequences, in 
the sense of Adamczewski and Bugeaud. We also prove that this 
family consists of transcendental numbers which are not 
Liouvillian.
We explore the partial quotients of their regular continued 
fraction expansions, arriving at no conclusion concerning their 
boundedness. 
\end{abstract}

\keywords{Continued fractions; Transcendence; Stammering 
Sequences.} 

\section{Introduction}

The problem of characterizing continued fractions of numbers 
beyond rational and quadratic has received consistent attention 
over the years. 
One direction points to an attempt to understand properties of 
algebraic numbers of degree at least three, but at times even 
this line ends up in the realm of transcendental numbers. 

Some investigations \cite{bruno,garrity,schwe} on algebraic 
numbers depart from generalizations of continued fractions. This 
line of investigation has been tried since Euler, see 
\cite{schwe,bruno} and references therein, with a view to 
generalize Lagrange's theorem on quadratic numbers,  in search 
of proving a relationship between algebraic numbers and 
periodicity of multidimensional maps yielding a
sequence of approximations to a irrational number. 
This theory has been further developed for instance  to the 
study of ergodicity of the triangle map \cite{meno}. In fact, a 
considerable variety of algorithms may be called 
generalizations of continued fractions: for instance  
\cite{keane}, Jacobi-Perron's and Poincaré's algorithms in 
\cite{schwe}. 

We report on a study of generalized continued fractions of 
the form:
\begin{equation}
\label{thab}
\theta(a,b)\stackrel{{\rm def}}{=} 
a_0+\frac{b_0}{a_1+\frac{b_1}{a_2+\frac{b_2}{a_3+\frac{b_3}{
a_4+\frac { b_4 } { \ddots}}}}}
 \ , 
\end{equation}
(where $a_0\geq 0$, $a_n\in \N$, for $n\in \N$ and $b_n \in 
\N$, for $n\geq 0$), investigating a class with a 
regularity close, in a 
sense, to periodicity. This family of generalized continued 
fractions converges when $(a_n)$ and $(b_n)$ are 
finite valued sequences. They were considered {\em formally} in 
\cite{list}, in the context exemplified in Section \ref{exam1}. 
The {\em stammering} sequences 
\cite{acta}, and sequences generated by morphisms 
\cite{abd,aldavquef,allsha}, consist in a natural step away 
from 
periodic ones. Similarly to the results in 
\cite{abd,aldavquef}, on regular continued fractions, we prove 
that the family of numbers considered are transcendental.

\begin{teo}
\label{teo1}
Suppose $(a_n)$ and $(b_n)$, $n\geq 0$, are fixed points of 
primitive substitutions, then the number $\theta(a,b)$ is 
transcendental.  
\end{teo}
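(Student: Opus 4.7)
The strategy I propose is to imitate the Adamczewski--Bugeaud approach for regular continued fractions, transported to the generalized setting of \eqref{thab}. The core is a combinatorial repetitions argument fed into the Schmidt Subspace Theorem, with the abstract's promise that $\theta(a,b)$ is \emph{stammering} providing the combinatorial input.

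First, I would consider the paired sequence $c_n=(a_n,b_n)$ over the product alphabet. Since $(a_n)$ and $(b_n)$ are fixed points of primitive substitutions, $(c_n)$ inherits uniform recurrence, and standard combinatorial results on substitutive words (critical exponent bounds in the Mignosi--Restivo / Krieger spirit) guarantee the existence of arbitrarily long words $V_n$, and prefixes $U_n$, such that $U_nV_nV_n$ is a prefix of $(c_k)$ with $|U_n|/|V_n|$ bounded. This is the stammering condition claimed in the abstract.

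Second, I would translate this into Diophantine approximation through the convergents. The convergents $p_n/q_n$ of $\theta(a,b)$ are generated by the matrix recursion
\[
\begin{pmatrix} p_n & p_{n-1} \\ q_n & q_{n-1} \end{pmatrix} = \begin{pmatrix} p_{n-1} & p_{n-2} \\ q_{n-1} & q_{n-2} \end{pmatrix}\begin{pmatrix} a_n & 1 \\ b_{n-1} & 0 \end{pmatrix}.
\]
Because the word $V_n$ repeats, the same block of matrices is multiplied twice in succession, so two convergents $p_r/q_r$ and $p_s/q_s$ at the two ends of the $V_nV_n$ block share a common multiplicative structure. As in the regular case, this produces, for infinitely many $n$, two very good rational approximations to $\theta(a,b)$ and yields three linear forms in $1,\theta(a,b),\theta(a,b)^2$ whose product, evaluated at integer points, is exceptionally small. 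Assuming $\theta(a,b)$ algebraic, the Schmidt Subspace Theorem forces these integer points to lie on finitely many proper subspaces, and a routine argument then shows that only finitely many $n$ can supply genuinely new relations, contradicting the infinite supply coming from primitivity.

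The main obstacle I foresee is the quantitative approximation inequality. For regular continued fractions one has the clean bound $|\theta - p_n/q_n| \le 1/(q_nq_{n+1})$, whereas here the numerators are twisted by the product $\prod b_k$, giving something like $|\theta(a,b)-p_n/q_n| = \prod_{k<n} b_k / (q_nq_{n+1})$. To apply the Subspace Theorem one must verify that, under the finite-valued hypothesis on $(a_n),(b_n)$, this product stays subexponential in $q_n$ and in fact negligible compared to the gain produced by the $V_nV_n$ repetition. A secondary technicality is verifying that the new ``$b$-twisted'' linear forms are still linearly independent in the algebraic case, which should follow because the matrix entries $(a_n,b_{n-1})$ are nondegenerate in the primitive substitutive setting. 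These two points together are where I expect most of the work to concentrate.
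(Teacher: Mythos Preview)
Your proposal is essentially the paper's own approach: stammering of the paired sequence $(a_n,b_n)$, quadratic approximants coming from the repeated block, and the Adamczewski--Bugeaud Subspace-Theorem machinery. The paper streamlines two of your steps: it uses the prefix-free stammering condition ($w_n^r$ is a prefix of the sequence, no $U_n$ needed for a substitution fixed point) and it packages the linear-forms argument by writing down the explicit periodic quadratic $\psi_k$ satisfying $q_{k-1}x^2+(q_k-p_{k-1})x-p_k=0$, then citing \cite{acta} directly; your ``main obstacle'' about the $\prod b_k$ twist is exactly what the paper's Lemma~\ref{logq_nlim} (boundedness of $q_n^{1/n}$) is there to handle.
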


\def\S{\mathsf S}
We also consider Mahler's classification within this family of 
transcendental numbers, proving that they cannot be 
Liouvillian. 
Let us call the numbers of the 
family of generalized 
continued fractions $\theta(a,b)$, with ${\mathbf a}$ and 
${\mathbf b}$ stammering sequences coming from a primitive 
substitution number of {\em type} $\S_3$.

\begin{teo}
Type $\S_3$ numbers are either $S$-numbers or 
$T$-numbers in Mahler's classification.
\label{teo2}
\end{teo}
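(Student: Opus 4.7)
The plan is to show that for every integer $n\geq 1$ the approximation exponent $w_n^*(\theta)$ is finite; by Mahler's definition this excludes the class of $U$-numbers and places $\theta$ in $S\cup T$. The case $n=1$ reduces to the already-established non-Liouvillian fact, so the real work lies in handling $n\geq 2$, for which I would use a quantitative version of the Subspace Theorem applied to the stammering structure underlying Theorem \ref{teo1}.

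Fix $n\geq 2$ and argue by contradiction: suppose $\theta$ is a $U_n$-number, so there is a sequence of algebraic numbers $\alpha_j$ of degree $\leq n$ with $|\theta-\alpha_j|<H(\alpha_j)^{-w_j}$ and $w_j\to\infty$. I would revisit the proof of Theorem \ref{teo1}, in which the stammering of $(a_n)$ and $(b_n)$ produces, from the convergents $p_k/q_k$ of the generalized CF, infinitely many anomalously small linear forms, and the qualitative Schmidt Subspace Theorem rules out the assumption that $\theta$ is algebraic. The quantitative form, due to Evertse--Schlickewei, bounds not just the existence but the number and size of the exceptional solutions. Feeding the minimal polynomials of the hypothetical $\alpha_j$ into the same framework, and using that the boundedness of $(a_n),(b_n)$ (forced by the finiteness of the alphabets) yields the geometric growth $q_k\leq C^k$ for some explicit $C$, one should extract an effective bound $w_n^*(\theta)\leq c(n)$ finite for each $n$, contradicting $w_j\to\infty$.

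The hard part will be the quantitative bookkeeping. In the single-sequence automatic setting treated by Adamczewski--Bugeaud, the quantitative Subspace Theorem translates more or less directly into a transcendence-measure bound, but here the joint stammering of $(a_n)$ and $(b_n)$ controls both the positions of the repeated blocks yielding the small linear forms and the magnitudes of those forms, so the two substitution parameters must be coordinated throughout the estimate. A further delicate point is that the resulting function $c(n)$ might well grow faster than linearly in $n$; proving $c(n)<\infty$ for each fixed $n$ is enough for the stated dichotomy, but it is precisely the possibility $c(n)/n\to\infty$ that prevents us from ruling out $T$-numbers alongside $S$-numbers in the conclusion.
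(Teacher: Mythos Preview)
Your proposal follows a very different route from the paper's, and in its present form it is only a plan, with the decisive estimates left to ``should extract'' and ``one should''. Two concrete issues: first, the case $n=1$ is \emph{not} already established anywhere in the paper --- Theorem~\ref{teo1} gives only transcendence, and the non-Liouville statement is precisely $w_1^*(\theta)<\infty$, which is part of what Theorem~\ref{teo2} asserts; treating it as known is circular. Second, the quantitative Subspace Theorem of Evertse--Schlickewei bounds the \emph{number} of large solutions, not directly the exponent $w_n^*$; turning that into a transcendence measure (as Adamczewski--Bugeaud do in~\cite{plms}) requires a substantial additional argument that you have not supplied, and it is not obvious how to adapt it to the generalized continued fraction setting where the convergents $p_k/q_k$ need not be coprime.

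The paper's argument is far more elementary and avoids the Subspace Theorem entirely. It observes that the convergents $p_n/q_n$ of the generalized continued fraction already furnish a sequence of good \emph{rational} approximations to $\theta$. After dividing out the common factor $d_n=(p_n,q_n)$ (which, by the determinant formula~\eqref{detAn}, divides $b_0\cdots b_{n-1}$ and hence grows like $(\beta^{\nu}\alpha^{1-\nu})^n$), one checks directly that the reduced denominators $q_n/d_n$ still satisfy $\limsup_n \log(q_{n+1}/d_{n+1})/\log(q_n/d_n)<\infty$ and that $|\theta-p_n/q_n|<(q_n/d_n)^{-2-\epsilon}$. Baker's theorem (quoted just before the proof) then gives $w_d^*(\theta)\leq\exp\exp(cd^2)$ for every $d$, which immediately rules out all $U$-numbers at once. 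The whole proof is a page of elementary inequalities on $q_n$ and $d_n$ using only the bounds from Section~\ref{conv}; no Subspace machinery, quantitative or otherwise, is needed.
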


The paper is organized as follows. In Section \ref{conv}, we 
prove the convergence of the generalized continued fraction 
expansions for type $\S_3$ numbers. In Section \ref{transc}, we 
prove the transcendence of type $\S_3$ numbers. In 
Section \ref{liou}, we use Baker's Theorem to prove that they 
are either $S$-numbers or $T$-numbers. In Section \ref{exam1}, 
we show some inconclusive calculations on the partial quotients 
of the regular continued fraction of an specific type $\S_3$ 
number.

\section{Convergence}
\label{conv}

We start from the analytic
theory of continued fractions \cite{wall} to prove the 
convergence of \eqref{thab} when $(a_n)$ and $(b_n)$, $n\geq 
0$, are sequences in a finite alphabet.

Let $\rr=[0,\infty]$ denote the extended positive real 
axis, 
with the understanding that $a+\infty=\infty$, for any 
$a\in \rr$; $a\cdot \infty=\infty$, if $a>0$, 
$0\cdot \infty = 0$ and $a/\infty=0$, if $a\in \R$. We do not 
need to define $\infty/\infty$. 

Given the sequences $(a_k)$ and $(b_k)$ of non-negative 
(positive) integers, consider the Möbius transforms 
$t_k:\rr\to \rr$
\[ t_k(w)= a_k+\frac{b_k}{w} \ ,\; k\in \N \ ,\]
and their compositions 
\[ t_it_j(w)= t_i(a_j+b_j/w)=a_i+b_i/(a_j+b_j/w) \ .\]
This set of Möbius transforms is closed under compositions and 
form a 
semigroup. 

It is useful to consider the natural correspondence between Möbius 
transformations and $2\times 2$ matrices: 
\[  
M_k=\begin{pmatrix} a_k & b_k \\ 1 & 0 \end{pmatrix} \ .\]
Taking the 
positive real cone ${\cal C}_2=\{(x,y) \ |\ 
x\geq 0\ , \ y\geq 0\ , x+y>0\}$ with the equivalence $(x,y) \sim 
\lambda (x,y)$ for every $\lambda>0$, we 
have an homomorfism between the semigroup of Möbius transforms, 
under composition, acting on $\rr$ and the algebra of matrices above 
(which are all invertible) acting on ${\cal C}_2/\sim$. 

Assume the limit
\[ \lim_{n\to \infty} t_0t_1t_2\cdots t_n(0)  \]
exists as a real positive number, then it is given once we 
know the sequences $(a_n)$ 
and $(b_n)$, $n\geq 0$. 
In this case, it is equal to $\lim_{n\to \infty} 
t_0t_1t_2\cdots t_{n-1}(\infty)$ as well, so that the initial 
point may 
be taken as $0$ or $\infty$ in the extended positive real axis.

In terms of matrices multiplication, we have 
\[ M_0M_1M_2\cdots M_n \begin{pmatrix} 0 \\ 1 \end{pmatrix} 
\sim  
M_0M_1M_2\cdots M_{n-1} \begin{pmatrix} 1 \\ 0 \end{pmatrix} \]
in ${\cal C}_2$. 
Define $p_{-1}=1$, $q_{-1}=0$, $p_0=a_0$, $q_0=1$ and 
\[ \begin{pmatrix}
    p_n & b_np_{n-1} \\ 
    q_n & b_nq_{n-1}
   \end{pmatrix} \stackrel{{\rm def}}{=} M_0M_1M_2 \cdots M_n \ 
,\ n\geq 0 \ . \]
We have the following second order recursive formulas for 
$(p_n,q_n)$:
\begin{align}
\label{recur}
& p_{n+1}=a_{n+1}p_n+b_np_{n-1} \\
& q_{n+1}=a_{n+1}q_n+b_nq_{n-1} \nonumber 
\end{align}
and the determinant formula
\begin{equation}
\label{detAn}
p_nq_{n-1}-p_{n-1}q_n = (-1)^{n-1} b_0\cdots b_{n-1}
\end{equation}

We recall the series associated with a continued fraction 
\cite{wall}:
\begin{lema}
Let $(q_n)$ denote the sequence of denominators given in 
\eqref{recur} for the continued fraction $\theta(a,b)$. 
\if 0
\begin{equation}
\label{basecf}
\frac{1}{1+\frac{b_1}{a_2+\frac{b_2}{a_3+\frac{b_3}{\ddots}}}} 
\ 
.
\end{equation}
\fi 
Let 
\begin{equation}
 \rho_k=-\frac{b_k q_{k-1}}{q_{k+1}} \ ,\; k 
\in \N \ .\label{assure_conv} 
\end{equation}
Then 
\[ a_0+\frac{b_0}{a_1}\left(1+\sum_{k=1}^{n-1} \rho_1\rho_2\ldots 
\rho_k \right) = \frac{p_n}{q_n} \ , n\geq 1 \ 
. \]
\end{lema}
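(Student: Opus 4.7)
The plan is to rewrite the convergents $p_n/q_n$ as a telescoping series using the determinant formula \eqref{detAn}, and then match that series with the claimed expression in terms of the products $\rho_1\cdots\rho_k$ by an index shift after simplifying the telescoping product of $q$-ratios inside each $\rho_k$.

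First I would write
\[ \frac{p_n}{q_n} = \frac{p_0}{q_0} + \sum_{k=1}^{n}\left(\frac{p_k}{q_k}-\frac{p_{k-1}}{q_{k-1}}\right), \]
and apply \eqref{detAn} to each summand to obtain
\[ \frac{p_n}{q_n} = a_0 + \sum_{k=1}^{n} \frac{(-1)^{k-1}\,b_0 b_1\cdots b_{k-1}}{q_{k-1}\,q_k}. \]
This takes care of the ``convergent side'' of the identity.

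Next I would simplify the product $\rho_1\rho_2\cdots\rho_k$. From the definition \eqref{assure_conv}, most of the $q$-factors telescope: the numerator contains $q_0 q_1\cdots q_{k-1}$ while the denominator contains $q_2 q_3\cdots q_{k+1}$, leaving
\[ \rho_1\rho_2\cdots\rho_k = (-1)^k\,\frac{b_1 b_2\cdots b_k\, q_0 q_1}{q_k\, q_{k+1}} = (-1)^k\,\frac{a_1\, b_1 b_2\cdots b_k}{q_k\, q_{k+1}}, \]
using $q_0=1$ and $q_1=a_1$ from the initial data and \eqref{recur}. Multiplying by $b_0/a_1$ and shifting the summation index $k\mapsto j=k+1$ yields
\[ \frac{b_0}{a_1}\sum_{k=1}^{n-1}\rho_1\cdots\rho_k = \sum_{j=2}^{n}\frac{(-1)^{j-1}\,b_0 b_1\cdots b_{j-1}}{q_{j-1}\,q_j}. \]

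Finally I would note that the $j=1$ term of the telescoping series equals $b_0/(q_0 q_1)=b_0/a_1$, so combining with $a_0$ it provides exactly the ``$a_0+b_0/a_1$'' piece on the left-hand side, and the remaining terms $j=2,\dots,n$ coincide with the sum just computed. Nothing about this is subtle; the main bookkeeping obstacle is simply to keep the index shift straight and to verify carefully that $q_0=1$ and $q_1=a_1$ (as forced by the initial conditions $p_{-1}=1$, $q_{-1}=0$, $p_0=a_0$, $q_0=1$) so that the factor $b_0/a_1$ in front of the sum is exactly what is needed to reindex. Induction on $n$ would give an alternative, but the telescoping argument is the most transparent.
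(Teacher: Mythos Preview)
Your proposal is correct and follows essentially the same approach as the paper: both write $p_n/q_n$ as a telescoping sum, apply the determinant formula \eqref{detAn} to each difference, and identify the resulting terms with $\tfrac{b_0}{a_1}\rho_1\cdots\rho_k$. The only cosmetic differences are that the paper starts the telescope at $p_1/q_1$ rather than $p_0/q_0$ and establishes the product formula $\tfrac{b_0}{a_1}\rho_1\cdots\rho_k=\tfrac{p_{k+1}}{q_{k+1}}-\tfrac{p_k}{q_k}$ by induction on $k$, whereas you compute the closed form of $\rho_1\cdots\rho_k$ directly via the telescoping of the $q$-factors; your version is arguably cleaner but the underlying idea is identical.
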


\begin{proof} 
For $n=1$, the sum is empty, $p_1=a_1a_0+b_0$ and $q_1=a_1$; the 
equality holds. 

Consider the telescopic sum, for $n\geq 1$,
\[ \frac{p_1}{q_1}+\sum_{k=1}^{n-1} \left( \frac{p_{k+1}}{q_{k+1}} - 
\frac{p_k}{q_k}\right) = \frac{p_n}{q_n} \ . \]
From \eqref{detAn}, 
\[ \left( \frac{p_{k+1}}{q_{k+1}} - 
\frac{p_k}{q_k}\right) = (-1)^k \frac{b_0b_1\cdots 
b_k}{q_{k+1}q_k}\ .\]
Now $\frac{b_0}{a_1}\rho_1=-\frac{b_0}{q_1}\frac{b_1q_0}{q_2} 
=-\frac{b_0b_1}{q_1q_2}=\frac{p_2}{q_2}-\frac{p_1}{q_1}$. Moreover
\[ \frac{p_3}{q_3}-\frac{p_2}{q_2}= \frac{b_0b_1b_2}{q_3q_2} = 
\frac{b_0}{q_1} \left(-\frac{b_1q_0}{q_2}\right)\left(-\frac{b_2q_1}{
q_3 }
\right) = \frac{b_0}{a_1} \rho_1\rho_2 \ . \]
Multiplicative cancelling provides the argument to 
deduce the formula 
\[ \frac{p_{k+1}}{q_{k+1}}-\frac{p_k}{q_k}= \frac{b_0}{a_1} 
\rho_1\rho_2\ldots \rho_k \]
by induction, finishing the proof. 
\end{proof}

Even though $b_n>1$ may occur in \eqref{thab}, note 
that we still have $q_{n+1}\geq 2^{(n-1)/2}$.
Indeed $q_0=1$, $q_1=a_1$ and $q_2=a_2q_1+b_1q_0>1$. Finally, 
since $a_n\geq 1$ and $b_n\geq 1$ for all $n\in \N$,
\[ q_{n+1}=a_{n+1}q_n+b_nq_{n-1} \geq q_n+q_{n-1}\geq  
2^{n/2-1}+\frac{2^{(n-1)/2}}2> 2^{(n-1)/2} \ .\]

\begin{lema}
If $(a_n)$ and $(b_n)$ are sequences on a finite alphabet 
$\A\subset [\alpha,\beta] \subset [1,\infty)$, then 
the generalized continued fraction \eqref{thab} converges. 
\label{dois}
\end{lema}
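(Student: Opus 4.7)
The plan is to reduce convergence of the continued fraction to convergence of the series representation furnished by the previous lemma, and then to show that this series converges geometrically under the finite alphabet hypothesis.

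First I would invoke the identity
\[
\frac{p_n}{q_n} = a_0 + \frac{b_0}{a_1}\left(1 + \sum_{k=1}^{n-1} \rho_1 \rho_2 \cdots \rho_k\right)
\]
so that the convergence of $\theta(a,b)$ is equivalent to the convergence of $\sum_{k \geq 1} \rho_1 \cdots \rho_k$. Since the factors $\rho_k = -b_k q_{k-1}/q_{k+1}$ are all negative, the series is alternating; it will suffice (and in fact give absolute convergence) to exhibit a constant $c < 1$, depending only on $\alpha$ and $\beta$, such that $|\rho_k| \leq c$ for every $k$.

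The main technical step is a uniform lower bound on the ratio $r_k := q_k/q_{k-1}$. Writing the denominator recursion as $r_{k+1} = a_{k+1} + b_k/r_k$ and using $r_1 = a_1 \geq \alpha \geq 1$, an obvious induction yields $r_k \geq \alpha$ for all $k \geq 1$. I would then rewrite
\[
|\rho_k| = \frac{b_k q_{k-1}}{a_{k+1} q_k + b_k q_{k-1}} = \frac{b_k/r_k}{a_{k+1} + b_k/r_k},
\]
and, since the map $x \mapsto x/(\alpha + x)$ is increasing in $x$ on $[0,\infty)$, substitute the bounds $a_{k+1} \geq \alpha$ and $b_k/r_k \leq \beta/\alpha$ to obtain
\[
|\rho_k| \leq \frac{\beta/\alpha}{\alpha + \beta/\alpha} = \frac{\beta}{\alpha^2 + \beta} =: c < 1.
\]

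Consequently $|\rho_1 \rho_2 \cdots \rho_k| \leq c^k$, so the series $\sum \rho_1 \cdots \rho_k$ converges absolutely and geometrically; passing $n \to \infty$ in the identity above delivers the existence of the finite positive limit $\theta(a,b)$. I do not anticipate a real obstacle here: the finite alphabet condition is precisely what is needed to turn the recursive monotonicity into a uniform geometric contraction, and the bounds on $b_k$ from both sides are essential only to keep both $|\rho_k|$ bounded away from $1$ and the limit away from $a_0$.
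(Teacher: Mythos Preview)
Your proof is correct and follows essentially the same route as the paper: bound $|\rho_k|$ uniformly by a constant $<1$ using the recursion for the denominators, then conclude that the associated series is dominated by a geometric series. The only cosmetic difference is that you use the sharper lower bound $q_k/q_{k-1}\geq\alpha$ (yielding $|\rho_k|\leq\beta/(\alpha^2+\beta)$), whereas the paper uses merely $q_k/q_{k-1}\geq 1$ to obtain $|\rho_k|<(1+\alpha/\beta)^{-1}=\beta/(\alpha+\beta)$; either bound suffices.
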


\begin{proof}
It follows from \eqref{recur} that $(q_n)$, $n\geq 1$, is 
increasing, and 
\[ |\rho_k| = \left| \frac{b_kq_{k-1}}{q_{k+1}}\right| = 
\left(1+ \frac{a_{k+1}q_k}{b_kq_{k-1}}\right)^{-1}< 
(1+\alpha/\beta)^{-1} \ . \]
Thus the series with general term $\rho_1\cdots \rho_k$ is 
bounded by a convergent geometric series.   
\end{proof}

\begin{lema}
Let $(q_n)$ denote the sequence of denominators given in 
\eqref{recur} for a generalized continued fraction, with 
$(a_n)$ and $(b_n)$ sequences in a finite alphabet $\A \subset 
[\alpha,\beta]\subset [1,\infty)$. Then 
$q_n^{1/n}$ is bounded. 
\label{logq_nlim}
\end{lema}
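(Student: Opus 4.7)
The plan is to read the growth of $q_n$ off the recursion \eqref{recur} by replacing $a_{n+1}$ and $b_n$ with their common upper bound $\beta$. Since $a_{n+1}\leq \beta$ and $b_n\leq \beta$, the recursion gives $q_{n+1}\leq \beta(q_n+q_{n-1})$, which is a linear second-order inequality with constant coefficients.

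Next I would introduce the comparison sequence $(Q_n)$ defined by $Q_0=q_0$, $Q_1=q_1$, and $Q_{n+1}=\beta(Q_n+Q_{n-1})$. A straightforward induction on $n$ shows $q_n\leq Q_n$. The characteristic polynomial of the comparison recursion is $x^2-\beta x-\beta=0$, with positive root
\[ \lambda = \frac{\beta+\sqrt{\beta^2+4\beta}}{2}, \]
so $Q_n = C_1\lambda^n + C_2\mu^n$ with $\mu$ the other (negative) root of smaller absolute value. In particular there is a constant $C>0$, depending only on $\alpha,\beta$, with $Q_n\leq C\lambda^n$ for all $n\geq 0$.

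Combining the two inequalities yields $q_n\leq C\lambda^n$, and therefore
\[ q_n^{1/n} \leq C^{1/n}\,\lambda \ \longrightarrow\ \lambda \quad\text{as } n\to\infty, \]
which in particular shows $(q_n^{1/n})$ is bounded (by, say, $2\lambda$ eventually, and by $\max_{n\leq N_0} q_n^{1/n}$ for the finite initial segment).

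There is essentially no obstacle: the only mildly delicate point is confirming that the initial values $q_0=1$, $q_1=a_1\leq\beta$ fit the comparison inequality $q_n\leq Q_n$ at the base of the induction, which is immediate from the definitions. One could alternatively derive a lower bound $q_n^{1/n}\geq \sqrt{2}^{\,(n-1)/n}\to\sqrt{2}$ from the estimate $q_{n+1}\geq 2^{(n-1)/2}$ proved just before the lemma, so that $q_n^{1/n}$ lies in a bounded interval independent of $n$, but this lower bound is not required for the statement.
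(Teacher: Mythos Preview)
Your argument is correct. Both you and the paper start from the same inequality coming out of \eqref{recur}, namely $q_{n+1}\leq \beta q_n+\beta q_{n-1}$, but you proceed differently from there. The paper first invokes the monotonicity of $(q_n)_{n\geq 1}$ (established in the proof of Lemma~\ref{dois}) to collapse this to the first-order bound $q_{n+1}\leq (a_{n+1}+b_n)q_n\leq 2\beta\, q_n$, which immediately yields $q_n\leq (2\beta)^{n-1}a_1$ and hence $q_n^{1/n}\leq 2\beta\, a_1^{1/n}$. You instead keep the second-order inequality and compare with the exact solution of $Q_{n+1}=\beta(Q_n+Q_{n-1})$, arriving at the dominant root $\lambda=(\beta+\sqrt{\beta^2+4\beta})/2$. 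Your bound is slightly sharper (indeed $\lambda<2\beta$ for all $\beta\geq 1$) and does not require knowing that $(q_n)$ is increasing, at the cost of a bit more machinery. The paper's route is shorter and gives a clean explicit constant. Your remark on the lower bound is correct but, as you note, not needed for the statement.
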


\begin{proof}
From \eqref{recur}, 
$q_1=a_1$, and $q_{n+1}<(a_{n+1}+b_n)q_n \leq (2\beta)q_n$. Hence 
$q_n^{1/n}\leq 2\beta \sqrt[n]{a_1}\leq 2\beta^{3/2}$, where 
the last inequality is necessary only when $a_1=\beta$. 
\end{proof}

\section{Transcendence}
\label{transc}

We now specialize the study of generalized continued fractions 
for sequences $(a_n)$ and $(b_n)$ which are generated by 
primitive substitutions. These sequences provide a wealth of examples 
of {\em stammering sequences}, defined below, following \cite{acta}.

Let us introduce some notation. The set 
${\cal A}$ is called alphabet.
A word $w$ on ${\cal A}$ is a finite or infinite sequence of 
letters in ${\cal A}$. For finite $w$, $|w|$ denotes the number 
of letters 
composing $w$. Given a natural number $k$, $w^k$ is the word 
obtained by $k$ concatenated repetitions of $w$.  Given a 
rational number $r>0$, which is not an integer, $w^r$ is the word 
$w^{\lfloor r\rfloor} 
w'$, where $\lfloor r\rfloor$ denotes the integer part of $r$ 
and $w'$ is a prefix of $w$ of length $\lceil (r-\lfloor 
r\rfloor)|w|\rceil$, where $\lceil q\rceil=\lfloor q\rfloor +1$ is 
the upper integer part of $q$. 

Note that if $(a_n)$ and $(b_n)$ are sequences on ${\cal A}$, then 
$(a_n,b_n)$ is a sequence in ${\cal A}\times {\cal A}$, which is also 
an alphabet. A sequence ${\bf a}=(a_n)$ has the {\em stammering 
property} if it is not a periodic sequence and, given $r>1$, there 
exists a sequence of finite words $(w_n)_{n\in \N}$, such that
\begin{itemize}
\item[a)] for every $n\in \N$, $w_n^r$ is a prefix of ${\bf a}$;
\item[b)] $(|w_n|)$ is increasing. 
\end{itemize}
We say, more briefly, that $(a_n)$ is a stammering 
sequence with exponent $r$. It is clear that if $(a_n)$ 
and $(b_n)$ are both stammering with exponents $r$ and $s$ 
respectively, then $(a_n,b_n)$ is also stammering with 
exponent $\min\{r,s\}$. 

\begin{lema}
If $u$ is a substitution sequence on a finite alphabet $\A$, 
then $u$ is stammering.
\label{fact1}
\end{lema}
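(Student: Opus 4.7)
The plan is to exploit the self-similar structure of $u$. Write $u = \sigma^{\infty}(a)$ where $\sigma$ is the primitive substitution with $u$ as its fixed point starting at the letter $a$, so that $\sigma(a)$ begins with $a$. Primitivity guarantees some $k \in \N$ for which every letter of $\A$ occurs in $\sigma^{k}(b)$ for every $b \in \A$. In particular, $a$ appears at least twice in $\sigma^{k}(a)$, so we may factor
\[ \sigma^{k}(a) \;=\; a\,X\,a\,Y , \]
with $X, Y \in \A^{*}$ (either possibly empty).

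Applying the morphism $\sigma^{n}$, and using that $\sigma^{n+k}(a)$ is always a prefix of $u$, the word $\sigma^{n}(a)\,\sigma^{n}(X)\,\sigma^{n}(a)$ is also a prefix of $u$. Set
\[ P_{n} := \sigma^{n}(a), \qquad W_{n} := \sigma^{n}(a\,X) = P_{n}\,\sigma^{n}(X) , \]
so that $W_{n}\,P_{n}$ is a prefix of $u$. Since $W_{n}$ itself begins with $P_{n}$, the letters of $u$ in positions $|W_{n}|,\ldots,|W_{n}|+|P_{n}|-1$ agree with the first $|P_{n}|$ letters of $W_{n}$. In the notation of the excerpt, this says that $W_{n}^{\,r_{n}}$ is a prefix of $u$, where $r_{n} := 1 + |P_{n}|/|W_{n}|$.

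It remains to pick a single exponent $r > 1$ valid for all large $n$. Here I would invoke Perron--Frobenius applied to the (primitive) incidence matrix of $\sigma$: its dominant eigenvalue $\lambda$ exceeds $1$ in any non-trivial case, and $|\sigma^{n}(w)| \sim c_{w}\,\lambda^{n}$ with $c_{w} > 0$ for every non-empty $w$. Hence
\[ \frac{|P_{n}|}{|W_{n}|} \;\longrightarrow\; \frac{c_{a}}{c_{a} + c_{X}} \;=:\; \alpha \;\in\; (0,1], \]
reading $c_{X} = 0$ and $\alpha = 1$ when $X$ is empty. Fixing any $r \in (1,\,1+\alpha)$, one has $r_{n} \geq r$ for all $n$ large, so $W_{n}^{r}$ is a prefix of $u$; meanwhile $|W_{n}| \sim (c_{a} + c_{X})\lambda^{n}$ is strictly increasing along a tail. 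Taking $(w_{n})$ to be such a tail of $(W_{n})$ yields conditions (a) and (b). The remaining ingredient, non-periodicity of $u$, is a classical consequence of primitivity combined with $\lambda > 1$; this is the one step I would cite from the substitution literature rather than derive, and the only part of the argument I regard as non-routine.
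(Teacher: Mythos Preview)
Your argument follows the same skeleton as the paper's: locate a second occurrence of the seed letter $a$ in some iterate of the substitution, write that iterate as $aXaY$, and then apply powers of the morphism to manufacture the increasing family of prefixes $W_n$. The paper is far terser --- it simply asserts the exponent bound $1+1/(|\xi^{jk}(\alpha)|-1)$ without saying why a fixed exponent persists along the whole sequence $(w_n)$ --- whereas your Perron--Frobenius step is exactly what is needed to make the uniform $r>1$ honest, so in that respect your write-up is the more complete of the two.

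One small slip to fix: from ``every letter of $\A$ occurs in $\sigma^{k}(b)$ for every $b$'' it does \emph{not} follow that $a$ occurs at least twice in $\sigma^{k}(a)$; the Thue--Morse substitution $\sigma(a)=ab$, $\sigma(b)=ba$ with $k=1$ is already a counterexample. Passing to $\sigma^{2k}(a)$ repairs this immediately, since $\sigma^{k}(a)$ then has at least $|\A|\geq 2$ letters and applying $\sigma^{k}$ to each of them produces a copy of $a$.
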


\begin{proof}
Denote the substitution map by $\xi:\A\to \A^+$.
Since $\A$ is a finite set, there is a $k\geq 1$ and $\alpha 
\in \A$ such that $\alpha$ is a prefix of $\xi^k(\alpha)$. 
$u=\lim_{n\to \infty} \xi^{kn}(\alpha)$. Moreover, there is a 
least finite $j$ such that $\alpha$ occurs a second time in 
$\xi^{jk}(\alpha)$. Therefore $u$ is stammering with $w\geq 
1+\frac{1}{|\xi^{jk}(\alpha)|-1}$. 
\end{proof}

\begin{proof}[Proof of Theorem \ref{teo1}]
From Lemma \ref{fact1} $(a_n)$ and $(b_n)$ stammering 
sequences with exponent $r>1$. Hence 
$\theta(a,b)$ has infintely many {\em good} 
quadratic approximations. Let $(w_n)\in ({\cal A}\times {\cal 
A})^*$ be a sequence of words of increasing length 
characterizing $(a_n,b_n)$ as a stammering sequence 
with exponent $r>1$. Consider $\psi_k(a,b)$ given 
by
\[ \psi_k(a,b)= 
c_0+\frac{d_0}{c_1+\frac{d_1}{c_2+\frac{d_2}{c_3+\frac{d_3}{\ddots}}}}
 \ , 
\]
where $c_j=a_j$, $d_j=b_j$, for $0\leq j<k$, and 
$c_{j}=c_{j\pmod{k}}$ 
and $d_{j}=d_{j\pmod{k}}$, for $j\geq k$. $\psi_k$ is a root of
the quadratic equation
\[ q_{k-1}x^2+(q_k-p_{k-1})x-p_k=0 \ ,  \]
which might not be in lowest terms. 

Arguing as in Theorem 1 from \cite{acta}, we choose $k$ from the 
subsequence of natural numbers given by $|w_n^r|$. Lemma 
\ref{logq_nlim} allows us to conclude that the generalized continued 
fraction  $\theta(a,b)$ is transcendental if both $(a_n)$ and $(b_n)$ 
are stammering sequences with exponent $r>1$. 
\end{proof}

\section{Quest on Liouville numbers}
\label{liou}

We address the question of Mahler's classification of the 
numbers for type ${\mathcal S}_3$ numbers. The 
statement of Baker's Theorem we quote use a measure of 
transcendence introduced by Koksma, which is equivalent to 
Mahler's, and we explain briefly, following 
\cite{plms}, Section 2. 

Let $d\geq 1$ and $\xi$ a real number. Denote by $P(X)$ an 
arbitrary polynomial with integer coefficients, and 
$H(P)=\max_{0\leq k\leq j}  \{|a_k|\ :\ 
P(X)=a_0+a_1X+\cdots+a_jX^j\}$ is the height of the polynomial 
$P$.   Let $w_d(\xi)$ be the supremum of the real numbers such 
that the inequality
\[ 0< |P(\xi)|\leq H(P)^{-w} \]
is true for infinitely many polynomials $P(X)$ with integer 
coefficients and degree at most $d$. Koksma introduced 
$w_d^*(\xi)$ as the supremum of the real numbers $w^*$ such that
\[ 0< |\xi-\alpha| \leq H(\alpha)^{-w^*-1} \]
are true for infinitely many algebraic numbers $\alpha$ of 
degree at most $d$, where $H(\alpha)$ is the 
height of the minimal polynomial with integer 
coefficients which vanishes at $\alpha$. 

Let $w(\xi) = \lim_{d\to \infty} \frac{w_d(\xi)}{d}$, then 
$\xi$ is called 
\begin{itemize}
\item an $A$-number if $w(\xi)=0$;
\item an $S$-number if $0<w(\xi)<\infty$;
\item a $T$-number if $w(\xi)=\infty$, but $w_d(\xi)<\infty$ 
for every integer $d\geq 1$;
\item an $U$-number if $w(\xi)=\infty$ and $w_d(\xi)=\infty$ 
for some $d\geq 1$.
\end{itemize}

It was shown by Koksma that $w^*_d$ and $w^*$ provide the same 
classification of numbers. Liouville numbers are precisely 
those for which $w_1(\xi)=\infty$, they are $U$-numbers of type 
1.

\begin{teob}[Baker]
\label{baker}
Let $\xi$ be a real number and $\epsilon >0$. Assume there is 
an infinite sequence of irreducible rational numbers 
$(p_n/q_n)_{n\in \N}$, $(p_n,q_n)=1$, ordered such that $2\leq 
q_1< q_2\leq \cdots $ satisfying
\[ \left| \xi - \frac{p_n}{q_n} \right| < \frac 
1{q_n^{2+\epsilon} } \ . \]
Additionally, suppose that 
\[ \limsup_{n\to \infty} \frac{\log q_{n+1}}{\log q_n}<\infty \ 
, \]
then there is a real number $c$, depending only on $\xi$ 
and 
$\epsilon$ such that
\[ w_d^* (\xi) \leq \exp\exp ( cd^2) \ . \]
for every $d\in \N$. Consequently, $\xi$ is either an 
$S$-number or a $T$-number. 
\end{teob}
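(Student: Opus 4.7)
The plan is to bound Koksma's function $w_d^*(\xi)$ by combining the given rational approximations $p_n/q_n$ with a Liouville-type lower bound, and to extract a constraint that forces $w_d^*(\xi) < \infty$ for every $d$. Write $C := \limsup_{n\to\infty} (\log q_{n+1})/(\log q_n)$, which is finite by hypothesis, so that for every large enough $n$ one has $q_{n+1} \leq q_n^{C+o(1)}$ and the sequence $(q_n)$ is therefore ``logarithmically dense'' in $[q_{n_0}, \infty)$: for every target size $Q$ above some threshold one can locate an index $n$ with $Q \leq q_n \leq Q^{C+o(1)}$.

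Fix $d \geq 1$ and consider an arbitrary algebraic number $\alpha$ of degree at most $d$ and height $H = H(\alpha)$ satisfying $|\xi - \alpha| \leq H^{-w^*-1}$; the aim is to bound $w^*$ in terms of $d$, independently of $\alpha$. Using the logarithmic density of the denominators, choose $n$ so that $q_n$ falls in a window $H^{\kappa} \leq q_n \leq H^{\kappa(C+o(1))}$ for an exponent $\kappa$ to be optimized. The triangle inequality gives
\[ \left| \alpha - \frac{p_n}{q_n}\right| \leq |\xi - \alpha| + \left|\xi - \frac{p_n}{q_n}\right| \leq H^{-w^*-1} + q_n^{-2-\epsilon}, \]
while the Liouville inequality, applied to algebraic $\alpha$ of degree at most $d$ and height $H$, gives the lower bound $|\alpha - p_n/q_n| \geq c_d / (H \cdot q_n^{d})$ whenever $p_n/q_n \neq \alpha$, with $c_d$ depending only on $d$. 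Comparing the two estimates and optimizing $\kappa$ yields an inequality of the form $w^* + 1 \leq F(d, C, \epsilon)$ for an explicit function $F$; tracking how $c_d^{-1}$ and the compounded powers of $C$ propagate through the degree parameter produces the double-exponential estimate $w_d^*(\xi) \leq \exp\exp(cd^2)$.

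The principal obstacle is the bookkeeping of constants: to arrive at $\exp\exp(cd^2)$, rather than a single exponential, one must compound the factor $C$ through the degree range in precisely the right way while absorbing the polynomial deterioration of $c_d^{-1}$. A secondary issue is the excluded case $p_n/q_n = \alpha$, which the Liouville bound does not cover; since the approximants $p_n/q_n$ are in lowest terms with strictly increasing denominators and $\alpha$ is fixed, this exception occurs for at most one index $n$ and so does not affect the supremum defining $w_d^*(\xi)$. The classification consequence is then immediate: the hypothesis $|\xi - p_n/q_n| < q_n^{-2-\epsilon}$ already supplies $w_1^*(\xi) \geq 1 + \epsilon > 0$, so $\xi$ is not an $A$-number; the finiteness of $w_d^*(\xi)$ for every $d$ rules out $U$-numbers (and in particular Liouville numbers); hence by exclusion $\xi$ is either an $S$-number or a $T$-number.
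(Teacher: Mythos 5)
First, a point of comparison: the paper does not prove this statement at all --- it is quoted as a known theorem of Baker (stated in an unnumbered theorem environment attributed to him, with the surrounding discussion deferring to the literature), so there is no internal proof to measure yours against. Your attempt must therefore stand on its own, and unfortunately its central step fails. You compare the triangle-inequality upper bound
\[ \left|\alpha - \frac{p_n}{q_n}\right| \;\leq\; H^{-w^*-1} + q_n^{-2-\epsilon} \]
with the Liouville lower bound $|\alpha - p_n/q_n| \geq c_d/(H q_n^{d})$. For this comparison to constrain $w^*$ you need the lower bound to exceed the term $q_n^{-2-\epsilon}$, i.e.\ roughly $H q_n^{d} \leq q_n^{2+\epsilon}$. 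With $q_n \asymp H^{\kappa}$ this reads $\kappa(2+\epsilon-d) > 1$, which has no solution $\kappa>0$ once $d \geq 2+\epsilon$. In other words, for every sufficiently large degree $d$ the Liouville bound is strictly weaker than the quality of approximation you are trying to exploit; the two estimates are then compatible for arbitrarily large $w^*$, and no bound on $w_d^*(\xi)$ comes out. This is not a bookkeeping problem to be fixed by optimizing $\kappa$ or tracking $c_d$; it is precisely the obstruction that makes Baker's theorem a genuine theorem rather than an exercise.

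The actual proof (Baker's, and the quantitative treatment in the cited Adamczewski--Bugeaud reference) replaces Liouville by a \emph{quantitative form of Roth's theorem}: for a fixed algebraic $\alpha$ of degree at most $d$, the \emph{number} of reduced rationals $p/q$ with $|\alpha - p/q| < q^{-2-\epsilon/2}$ and $q$ in a prescribed range is bounded explicitly in terms of $d$ and $\epsilon$. The triangle inequality then shows that if $|\xi-\alpha| \leq H^{-w^*-1}$ with $w^*$ large, all the $p_n/q_n$ with $q_n^{2+\epsilon}$ up to roughly $H^{w^*+1}$ are such good approximations to $\alpha$; the gap hypothesis $\limsup_n \log q_{n+1}/\log q_n < \infty$ guarantees that the range contains many indices $n$, and comparing this count against the Roth-type bound is what produces $w_d^*(\xi) \leq \exp\exp(cd^2)$. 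Your closing remarks (the single exceptional index with $p_n/q_n=\alpha$, and the deduction that finiteness of every $w_d^*$ rules out $U$-numbers, hence leaves only $S$ or $T$) are correct, but they rest on an estimate your argument does not actually deliver.
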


\begin{proof}[Proof of Theorem \ref{teo2}]
We note that the hypothesis of irreducibility is lacking for 
type $\S_3$ numbers. Let us write $d_n=(p_n,q_n)$. By 
eq. \eqref{detAn}, $d_n=b_0\ldots b_{n-1}$. Recall that, for 
primitive substitutions in ${\cal A}=\{\alpha,\beta\}\subset 
\N$, there is a frequency $\nu$, which is 
uniform in the sequence $({\mathbf b})$ \cite{queffe}, for 
which 
$b_k=\beta$. Thus, $d_n \approx 
\beta^\nu \alpha^{1-\nu}$ for large $n$. If, for every $n\in 
\N$, there is a number $\theta$ such that $d_n < 
\left(\frac{q_n}{d_n}\right)^\theta$, then
\[ 0< \left| \xi -\frac{p_n/d_n}{q_n/d_n}\right| < \frac 
1{q_n^{2+\epsilon}}= \frac 1{d_n^{2+\epsilon} 
(q_n/d_n)^{2+\epsilon}} \ .\]
In this case, from the estimates of $q_n$ and $d_n$, the limit 
\[ \limsup_{n\to \infty} 
\frac{\log(q_{n+1}/d_{n+1})}{\log(q_n/d_n)} <\infty \ .\]
We would conclude from 
Theorem \ref{baker} that type $\S_3$ contains  either 
$S$-numbers or $T$-numbers and no Liouville numbers.

From the analysis of Section \ref{conv}, keeping its notations,
\[ (-1)^n \frac{d_n}{q_nq_{n-1}} = \frac{b_0}{a_1} \rho_1 
\ldots \rho_{n-1} \ . \]
Now 
$|\rho_k|=\left(1+\frac{a_{k+1}q_k}{b_kq_{k-1}}\right)^{-1}$, 
and since $q_k\leq 2\beta q_{k-1}$, we conclude that 
\[ |\rho_k| > \left( 1+ \frac{2\beta^2}{\alpha}\right)^{-1} \ . 
\]
Therefore, recalling that $q_{n-1}\geq 2^{(n-3)/2}$
\[ \frac{d_n}{q_n} > q_{n-1} (1+2\beta^2/\alpha)^{-n+1} 
\frac{b_0}{a_1} \quad \Rightarrow\quad \frac{q_n}{d_n}<
(1+2\beta^2/\alpha)^{n-1} 2^{-(n-3)/2}\frac{\beta}{\alpha} \ . 
\]

Therefore, we want to determine the 
existence of a solution for $\theta$ for the inequality
\[  d_n  < \left(\frac{q_n}{d_n}\right)^\theta \]
considering that $d_n \approx \beta^{\nu n} \alpha^{(1-\nu)n}$
we obtain the inequality 
\[ \beta^{\nu n} 
\alpha^{(1-\nu)n} 
< (1+2\beta^2/\alpha)^{\theta(n-1)}2^{-\theta(n-3)/2} 
\frac{\beta}{\alpha} \ .\]
For large $n$, it is sufficient to solve 
\[ \beta^{\nu}\alpha^{1-\nu} < 
\frac 
1{2^{\theta/2}}\left(1+\frac{2\beta^2}{\alpha}\right)^\theta  \ 
,\]
which clearly has the solution $\theta=1$, since 
$\alpha<\beta$ and $0<\nu<1$, implying $\beta^\nu 
\alpha^{1-\nu} < \beta$. We conclude that type 
$\S_3$ 
consists 
only of $S$-numbers or $T$-numbers. 
\end{proof}

\section{Example: partial quotients of a corresponding regular 
continued fraction}
\label{exam1}

We now examine one specific example: a generalized continued fraction 
associated with the period doubling sequence. The 
period doubling sequence, which we denote by $\omega$, 
is the fixed point of the substitution 
$\xi(\alpha)=\alpha\beta$ and 
$\xi(\beta)=\alpha\alpha$ on the two lettered alphabet 
$\{\alpha,\beta\}$. 
It is 
also the limit of a sequence of {\em foldings}, and called a 
{\em folded 
sequence} \cite{allsha}.

We make some observations and one question about 
the partial 
quotients of the 
corresponding regular continued fraction representing the  
real number that the generalized continued 
fraction given by \eqref{thab} when both sequences 
$({\mathbf a})$ and $({\mathbf b})$ are given by the period 
doubling sequence: $a_n=b_n=\omega_n$.

We choose to view the period doubling sequence as the 
limit of folding operations. The algebra of matrices with fixed 
determinant will play a role. A 
folding is a mapping 
\begin{align*}
{\cal F}_p: &{\cal A}^*\to {\cal A}^*\\
& w\mapsto wp\tilde{w}  
\end{align*}
where $\tilde{w}$ equals the word $w$ reversed: if $w=a_1\ldots 
a_n$, $a_i\in {\cal A}$, then $\tilde{w}=a_n\ldots a_1$, and 
$p\in {\cal A}^*$. 

It is clear that
\[ \omega= \lim_{n\to \infty} ({\cal F}_a\circ {\cal F}_b)^n 
(a) \ ,\]
see also \cite{allsha},
where the limit is understood in the product topology 
(of the discrete topology) in 
${\cal A}^\N\cup {\cal A}^*$.

Let $\theta$ denote the number whose generalized continued 
fraction is obtained from the substitution of the letters 
$\alpha$ and $\beta$ by 
\[ A=\begin{pmatrix} 1 & 1 \\ 1 & 0 \end{pmatrix} \ , \quad B= 
\begin{pmatrix} 3 & 3 \\
 1 & 0 
\end{pmatrix} \]
respectively. It corresponds to the choice $\{1,3\}$ for the 
alphabet where the sequences $({\mathbf a})$ and 
$({\mathbf b})$ take values.

Now we use Raney transducers \cite{raney} to describe 
the computation of some partial quotients of the regular 
continued fraction converging to $\theta$. A 
transducer ${\mathscr T}=(Q,\Sigma,\delta,\lambda)$, or 
two-tape machine, is defined by a set of states $Q$, an 
alphabet $\Sigma$, a transition function $\delta: Q\times 
\sigma \to Q$, and an output function $\lambda: Q\times \sigma 
\to \Sigma^*$, where $\sigma\subset \Sigma$ (a more general 
definition is possible \cite{allsha}, but this is sufficient 
for our purposes).

The states of Raney's tranducers are column and 
row (or doubly) balanced matrices over the non-negative 
integers 
with a fixed determinant. A matrix $\begin{pmatrix} a & b \\ c 
& 
d\end{pmatrix}$ is column balanced if $(a-b)(c-d)<0$ 
\cite{raney}. 

Figure 1 shows the Raney transducer for determinant 3 doubly 
balanced matrices. In the text, we use the abbreviations: 
$\beta_1=\begin{pmatrix} 3 & 0 \\ 0 & 1 \end{pmatrix}$, 
$\beta_2=\begin{pmatrix} 1 & 0 \\ 0 & 3 \end{pmatrix}$ and 
$\beta_3=\begin{pmatrix} 2 & 1 \\ 1 & 2 \end{pmatrix}$. Then 
$Q=\{\beta_1,\beta_2,\beta_3\}$, $\Sigma=\{L,R\}$, where 
\[ R=\begin{pmatrix}
      1 & 1 \\ 0 & 1 
     \end{pmatrix} \ , \quad L=\begin{pmatrix} 1 & 0 \\ 1 & 1 
\end{pmatrix} \ ,\]
the 
transition function $\delta$ and the output function are 
indicated in the graph. 

For instance, if $RL^2R$ is the input word on state $\beta_2$, 
the output word is $L^2R^4$ and the final state is $\beta_1$. 
Any infinite word in $\Sigma^\N$ can be read by ${\mathscr 
T}$, but not every finite word can be fully read by ${\mathscr 
T}$, for 
instance, $L^{11}$ in state $\beta_1$ will produce $L^3$, but 
$L^2$ will stay in the reading queue in state $\beta_1$. 
Algebraically, these two examples are written as
\begin{align*}
\beta_2 RL^2R & =  L \beta_3 LR = L LR \beta_1R = L^2 RR^3 
\beta_1 = L^2R^4 \beta_1 \\
\beta_1 L^{11} &=  L^3 \beta_1 L^2 
\end{align*}

As explained in \cite{vdP}, Theorem 1 in \cite{list} or even  
Theorem 5.1 in \cite{raney}, one may use the transducer 
${\mathscr T}$ to commute the matrices $A$ and $B$ to get an 
approximation to the continued fraction of $\theta$.

\begin{figure}
\centering 
\begin{tikzpicture}[->, >=stealth', auto, semithick, node 
distance=3.5cm]
\tikzstyle{every state}=[fill=cyan,draw=none,thick,text=white,
scale=1.4]
\node[state] (Am) {$\displaystyle\left(\begin{smallmatrix} 3 & 
0 
\\ 0 & 1 \end{smallmatrix}\right)$};
\node[state] (B) [above right of=Am] 
{$\displaystyle\left(\begin{smallmatrix} 2 & 1 \\ 1 
& 2 \end{smallmatrix}\right)$};
\node[state] (Ad) [below right of=B] 
{$\displaystyle\left(\begin{smallmatrix} 1 & 0 \\ 0 
& 3 \end{smallmatrix}\right)$};
\path
(B) edge [bend right=12] node[above,yshift=0.2cm,xshift=-0.2cm] 
{$L/LR$} (Am)
edge  [bend left=12] node [xshift=0.1cm,yshift=-.2cm] 
{$R/RL$} (Ad)
(Am) 
edge [bend right=20] node [below,yshift=.1cm] {$L^2R/RL^2$} 
(Ad)
edge [bend right=20] node [below,yshift=0.2cm,xshift=0.6cm] 
{$LR/R$} (B)
edge [out=245,in=305,looseness=8] node [xshift=0.4cm]  
{$L^3/L$} (Am)
edge [out=175,in=235,looseness=8] node [xshift=-0.1cm]  
{$R/R^3$} (Am)
(Ad) edge [out=305,in=5,looseness=8] node {$L/L^3$} (Ad)
edge [out=235,in=295,looseness=8] node  [xshift=-0.4cm] 
{$R^3/R$}
(Ad)
edge [bend left=20] node [below,xshift=-0.3cm] {$RL/L$} (B)
edge [bend right=12] node [below] {$R^2L/LR^2$} (Am)
;
\end{tikzpicture}
\caption{Transducer $\mathscr T$.}
\label{trans1}
\end{figure}
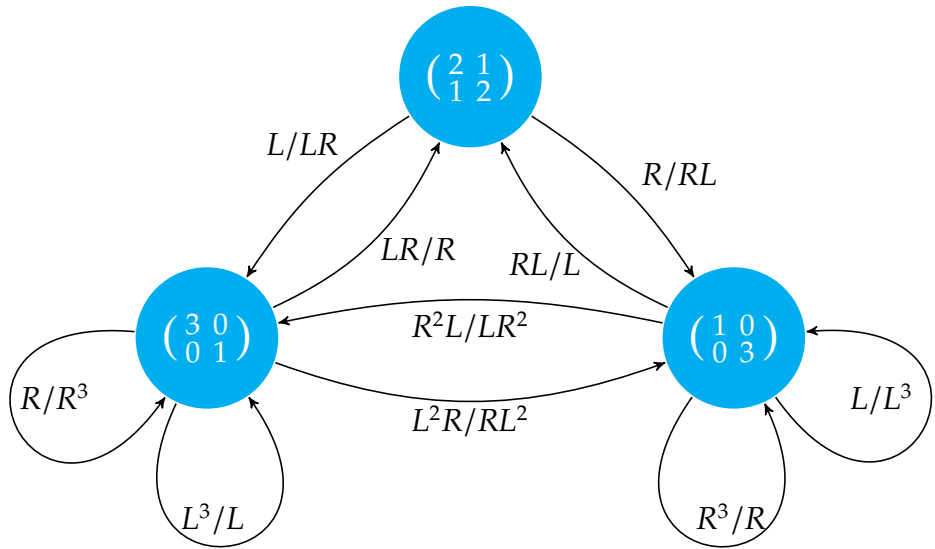

\def\A{\mathscr A}
\def\B{\mathscr B}
Introducing the matrix $J=\begin{pmatrix} 0 & 1 \\ 1 & 0 
\end{pmatrix}$, we note the following relations: 
$A=RJ$, $B=\beta_1 RJ$, $\beta_2 J=J\beta_1$.

The homomorfism between ${\cal A}^*$ and the semigroup of 
matrices generated by $\{A,B\}$, as Moebius transforms, is 
the basis for discovering some curious properties of the 
regular continued 
fraction of $\theta$. 

The sequence of matrices  
\[ ABA,\ ABAAABA,\ ABAAABABABAAABA \ ,\]
which corresponds to ${\cal F}_b(a)$, ${\cal F}_a\circ{\cal F}_b(a)$, 
${\cal F}_b\circ {\cal F}_a\circ {\cal F}_b(a)$, yields the 
beginning of the regular continued fraction expansion of 
$\theta$. 

A step by step calculation shows the basic features in the use 
of ${\mathscr T}$:
\begin{align*}
ABAAABA &= RJ(\beta_1 RJ) RJRJRJ(\beta_1 RJ)RJ \\
&= R \beta_2 (JRJ)R(JRJ)R\beta_2 (JRJ)RJ \\
&= R \beta_2 LRLR\beta_2 LRJ \\
& = R L^3 \beta_2 RLR L^3 \beta_2 RJ \\
& =RL^3 L \beta_3 RL^3 \beta_2 RJ \\
& = RL^4 RL \beta_2 L^3 \beta_2 RJ \\
&= RL^4 RL L^9 \beta_2^2 RJ = RL^4 RL^{10} \beta_2^2 RJ \ .
\end{align*}
This means that the continued fraction of $\theta$ begins as 
$[1;4,1,k,\cdots]$, with $k\geq 10$.

Writing $T=ABAAABA$, upon the next folding
\begin{align*}
TBT &= RL^4RL^{10} \beta_2^2 RJ (\beta_1RJ) RL^4RL^{10} 
\beta_2^2 RJ \\
&= RL^4 RL^{10} \beta_2^2 R\beta_2 LRL^4RL^{10} \beta_2^2 RJ \\
&= RL^4RL^{10} \beta_2^2RL^3L\beta_3 L^3
RL^{10} \beta_2^2 RJ \\
& = RL^4R L^{10} \beta_2^2 RL^4LR \beta_1 L^2R 
L^{10} \beta_2^2 RJ \\
&= RL^4R L^{10} \beta_2^2 RL^5 RRL^2 \beta_2 L^{10} 
\beta_2^2 RJ \\
&=RL^4RL^{10} \beta_2 L \beta_3L^4 R^2 L^{32} \beta_2^3 RJ \\
&= RL^4RL^{10} \beta_2 L \beta_3 L^4 R^2 L^{32}\beta_2^3 RJ 
\\
&= RL^4 R L^{13} \beta_2 LR \beta_1 L^3 R^2L^{32} \beta_2^3 RJ 
\\
& = RL^4 RL^{16} \beta_2 R L \beta_1 R^2 L^{32} \beta_2^3 RJ \\
&= RL^4 RL^{17} \beta_3 R^6L^{10} \beta_1 L^2 \beta_2^3 RJ \\
&= RL^4 RL^{17} RL \beta_2 R^5L^{10} \beta_1 L^2\beta_2^3 RJ \\
&= RL^4 RL^{17} RL R LR^2 \beta_1 L^9 \beta_1 L^2 \beta_2^3 RJ 
\\
&= RL^4 RL^{17} RLRLR^2L^3 \beta_1^2 L^2 \beta_2^3 RJ 
\end{align*}
Now we have the knowledge that the beginning of the regular 
continued fraction expansion of 
$\theta=[1;4,1,17,1,1,1,1,2,a,\cdots]$, with $a\geq 3$. 

Instructions for the transducer are stuck on the right of this 
factorization to be read for the next folding. Note that only 
states $\beta_1$ and $\beta_2$ will remain, since a transition 
from $\beta_3$ is always possible given any finite word in 
$\Sigma^*$. 

An inductive prediction as to whether the high power in the 
beginning, $L^{17}$, will consistently increase upon 
(sufficient) repetitions of foldings 
is out of reach. This observation poses the question: are the 
partial quotients of regular continued fraction of $\theta$ 
bounded? Similar calculations have been done with a simpler 
choice of the alphabet, that is, $\{1,2\}$, where the 
transducer has only two states (doubly balanced matrices with 
determinant 2).

\end{document}